\documentclass[12pt]{amsart}
\usepackage{amsfonts}
\usepackage{amsmath}
\usepackage{amssymb}
\usepackage{graphicx}
\usepackage{amscd}
\usepackage{xcolor}

\usepackage{alphabeta}
\usepackage{ulem}
\usepackage{bbm}

\allowdisplaybreaks

\usepackage{hyperref}
\setcounter{MaxMatrixCols}{10}

\newtheorem{theorem}{Theorem}

\newtheorem{lemma}[theorem]{Lemma}

\newtheorem{proposition}[theorem]{Proposition}

\begin{document}
	
	\begin{abstract}
		We prove that for a certain class of $n$ dimensional rank one  locally symmetric spaces, if $f \in L^p$, $1\leq p \leq 2$, then the Riesz means of order $z$ of $f$ converge to $f$ almost everywhere, for $\mathrm{Re}z> (n-1)(1/p-1/2)$.
	\end{abstract}

	\title[Riesz means]{Riesz means on locally symmetric spaces}
	\author{
		Effie Papageorgiou 
	}
	\title{Riesz means on   locally symmetric spaces}
	\thanks{Supported by the Hellenic Foundation for Research and Innovation, Project HFRI-FM17-1733.}
	\email{papageoeffie@gmail.com}
	\address{Department of Mathematics, Aristotle University of Thessaloniki,
		Thessaloniki 54124, Greece.}
	\curraddr{Department of Mathematics and Applied Mathematics, University of Crete, Voutes Campus, 70013 Heraklion, Crete, Greece.}
	\subjclass[2020]{42B15, 42B08, 22E30, 22E40}
	\keywords{Riesz means, symmetric spaces, locally symmetric spaces, almost everywhere convergence}

	\maketitle
	
	\section{Introduction and statement of the results}
	
	In this article we study the almost everywhere convergence of the Riesz means on quotients of rank one noncompact symmetric spaces over a proper subgroup of isometries. Recall that noncompact rank one symmetric spaces are the real, complex and quaternionic hyperbolic spaces,  and the octonionic hyperbolic plane.  We extend the results obtained on rank one symmetric spaces in \cite{GIUMA} to a class of rank one locally symmetric spaces. To state our results, we need to introduce some notation.
	
	Let $G$ be a semi-simple, noncompact, connected Lie group with finite
	center and let $K$ be a maximal compact subgroup of $G$. Consider the
	symmetric space of noncompact type $X=G/K$. Let $\dim X=n$. Denote by $\mathfrak{g}$ and $\mathfrak{k}$ the Lie algebras of $G$ and $K$, respectively. We have the Cartan decomposition $\mathfrak{g}=\mathfrak{p} \oplus  \mathfrak{k}$. Let $\mathfrak{a}$ be a maximal abelian subspace of $\mathfrak{p}$ and let $\mathfrak{a}^*$ be its dual.  If $\mathfrak{a} \cong \mathbb{R}$, then we say that $X$ has rank one. From now on, we assume that $\text{rank}X=1$. 
	
	Denote by $\rho $ the half sum of positive roots counted with their
	multiplicities, which is just a positive constant depending on the structure of $X$. Fix $R\geq  \rho^{2}$ and $z\in \mathbb{C}$ with
	$\mathrm{Re}z>0 $, and consider the even and bounded non-negative function
	\begin{equation}  \label{mult}
		s_{R}^z(\lambda)=\left( 1-\frac{\lambda^2+\rho^2}{R}\right)^z_{+}, \; \lambda \in \mathfrak{a}^{\ast}\cong \mathbb{R}.
	\end{equation}
	Denote by $\kappa_{R}^z$ the inverse spherical Fourier transform of 
	$s_{R}^z$ in the sense of distributions and consider the Riesz means on $X$ to be the convolution operator $S_R^z$: 
	\begin{equation}
		S_R^z(f)(x)=\int_{G}\kappa _R^z(y^{-1}x)f(y)dy,\quad f\in C_{0}^{\infty}(X).  \label{operatorX}
	\end{equation}
	
	Let $\Gamma $ be a discrete and torsion free subgroup of $G$ and consider the locally symmetric space $M=\Gamma \backslash X=\Gamma
	\backslash G/K.$ Then $M$, equipped with the projection of the canonical
	Riemannian structure of $X$, becomes a Riemannian manifold.
	
	To define Riesz means on $M$, we first observe that if $f\in
	C_{0}^{\infty }(M)$, then the function $S_{R}^{z }f$ defined by (\ref{operatorX}) is right $K$-invariant and left $\Gamma $-invariant. So
	$S_{R}^{z }$ can be considered as an operator acting on functions on $M,$ which we shall denote by $\widehat{S}_{R}^{z }$.
	
	Denote by $d(\cdot, \cdot)$ the Riemannian distance on $X$ and let
	\begin{equation}\label{poincareseries}
		P_s(x, y)=\sum_{ \gamma \in \Gamma}e^{-sd(x,\gamma y)}, \quad \forall s>0, \,\forall x, y\in X,
	\end{equation}
	be the Poincar\'{e} series. Note that $P_s(\cdot, \cdot)$ can be both viewed as a function on $X\times X$ as on $M \times M$.
	The critical exponent $\delta(\Gamma)$ is defined by
	\begin{equation*}
		\delta (\Gamma)=\inf \{ s>0: P_s(x,y)< +\infty \},
	\end{equation*}
	and is independent of the choice of $x,y \in X$. It may also be defined by
	\[
	\delta(\Gamma)= \limsup_{R\rightarrow +\infty}\frac{\log N_R(x,y)}{R}, \quad \forall x, y \in X,
	\]
	where $N_R(x, y) = \#\{\gamma \in \Gamma|\; d(x, \gamma y) \leq R\}$ denotes the orbital counting function.
	Using the fact that in rank one symmetric spaces, the volume growth at infinity is exponential, i.e., for all $x\in X$ and $R>1$, $|B(x,R)|\asymp e^{2\rho R}$, \cite{Kni97}, it can be shown that  $\delta(\Gamma)\in[0,2\rho]$, see \cite[Section 1.6]{Nicholls} for real hyperbolic space, generalized for arbitrary rank symmetric spaces in \cite[Section 2]{Leuz2004}.

	We say that a rank one locally symmetric space $M = \Gamma \backslash G/K$ belongs in the \textit{class (R)} if 
	\begin{itemize}
		\item[(i)]  $\delta(\Gamma)< \rho$,
		\item[(ii)] $\sup_{x, y \in X}P_s(x, y)<+\infty$, for any $s>\delta(\Gamma)$, and
		\item[(iii)] $M$ has bounded geometry.
	\end{itemize}
	
We say that a manifold $M$ has bounded geometry if its Ricci curvature is uniformly bounded below (always true for complete locally symmetric spaces) and if its injectivity radius is bounded away from zero. Conditions (ii) and (iii) hold for instance for convex cocompact groups. Recall that $\Gamma$ is called convex cocompact if $\Gamma \backslash \text{Conv}(\Lambda_{\Gamma})$ is compact, where $\text{Conv}(\Lambda_{\Gamma}$) be the convex hull of the limit set $\Lambda_{\Gamma}$ of $\Gamma$. 
	
	Our main result is the following, which  extends the results of \cite{GIUMA} for rank one noncompact symmetric spaces, in the class (R) of rank one locally symmetric spaces.

	\begin{theorem}\label{Thm}
		Let $1\leq p \leq 2$. If $M \in (R)$ and $\mathrm{Re}z>(n-1)(\frac{1}{p}-\frac{1}{2})$, then 
		\begin{equation}\label{aec}\
			\lim\limits_{R\rightarrow +\infty}\widehat{S}_R^zf(x)=f(x), \; \text{a.e., for } f\in L^p(M).
		\end{equation}
	\end{theorem}
	
	The Riesz means operator has been extensively studied in the case of $\textbf{R}^n$ (\cite{CHRIST8,CHRIST7,DAVCHANG,STEIN29}). The case of Lie groups and Riemannian manifolds of non-negative curvature is treated in \cite{ALEXOLO} and the case of elliptic differential operators on compact manifolds in \cite{Berard,ChristSogge,GIUTRA,HORM,Sogge}. For the case of rank one noncompact symmetric spaces see \cite{GIUMA}, for $SL(3, \mathbb{H})/Sp(3)$ see \cite{zhu} and for a general result on symmetric spaces (although not optimal)  see \cite{RieszFP}.  Note that as in the euclidean case of $\mathbb{R}^{n}$, \cite{STEIN29}, as well as in
	rank one symmetric spaces, \cite{GIUMA}, we obtain that (\ref{aec}) is valid
	for $\mathrm{Re}z$ larger than the critical index $z_0(n,p)= (n-1) \left( \frac{1}{p}-\frac{1}{2}\right)$.
	
	Unlike the euclidean case, or for instance Riemannian manifolds of non-negative
	curvature, \cite{ALEXOLO}, where (\ref{aec}) follows from $L^p$ continuity of a Riesz means maximal operator,  even the problem of $L^p$ boundedness of $S_R^z$, $p\neq 2$, is ill posed on noncompact symmetric spaces. Indeed, for rank one, $s_R^z$ is compactly supported so it does not extend to a holomorphic function in any complex strip containing the real line, an extension that would be a necessary condition, \cite{CLERC}. 
	
	Instead, one could use $L^p \rightarrow L^r$ mapping properties of the  maximal operator $S_{\ast}^{z}f(x)=\sup_{R \geq\rho^2}|S_{R}^{z}f(x)|$, 
	based on estimates of the kernel $\kappa_R^z$. More precisely, the approach pursued in \cite{GIUMA} for rank one symmetric spaces, is to show that if $\text{Re}z>(n-1)/2$, then $S_{\ast}^{z}f(x)\leq M_1f(x)+c |f|\ast k(x)$, where $M_1$ is the Hardy-Littlewood maximal function over the balls of radius less than $1$, and $k$ is a kernel in $L^q$ for every $q \geq 2$. The required almost everywhere convergence follows from  mapping properties of $M_1$ and $\ast \kappa$, and complex interpolation. 
	
	In the present setting which deals with the class (R) of locally symmetric spaces, condition (i) allows firstly to transform estimates of $\kappa_{R}^z$ on $X$ to kernel bounds on $M$, thus defining an integral Riesz means operator $\widehat{S}_R^z$ on $M$. Next, we decompose the corresponding maximal operator $\widehat{S}_{\ast}^z$ to a local part and a part at infinity. The assumption of bounded geometry (iii) (and conditions (i), (ii), via a heat kernel argument) ensures that all balls of the same small radius behave like their euclidean counterparts. Thus, the local part of $\widehat{S}_{\ast}^z$ can be also controlled by the Hardy-Littlewood maximal operator. The part at infinity can be viewed as an integral operator on $M$, with a kernel belonging to all  $L^q(M)$, $q\geq q_0(M)$, where $q_0(M)$ is large enough, but finite. Assumption (ii) on the uniform bound of Poincar{\'e} series turns out to be crucial in both parts.

	This paper is organized as follows. In Section 2 we present the necessary
	preliminaries, and in Section 3 we prove Theorem \ref{Thm}. 
	
	Throughout this article, the different constants will always be denoted by the same letter $c$.
	
	\section{Preliminaries}
	In this section we recall some basic facts about symmetric spaces. For more
	details see for example \cite{HEL, HELnew}.
	\subsection{Symmetric spaces}
	Let $G$ be a semisimple Lie group, connected, noncompact, with finite center
	and let $K$ be a maximal compact subgroup of $G$. We denote by $X$ the noncompact symmetric space $G/K$. In the sequel we
	assume that $\mathrm{dim}X=n$. Denote by $\mathfrak{g}$ and $\mathfrak{k}$ the Lie
	algebras of $G$ and $K$. Let also $\mathfrak{p}$ be the subspace of $%
	\mathfrak{g}$ which is orthogonal to $\mathfrak{k}$ with respect to the
	Killing form. The Killing form induces a $K$-invariant scalar product on $%
	\mathfrak{p}$ and hence a $G$-invariant metric on $G/K$. Denote  by $d(.,.)$ the Riemannian
	distance and by $dx$ the associated Riemannian measure on $X$. 
	
	Fix $\mathfrak{a}
	$ a maximal abelian subspace of $\mathfrak{p}$ and denote by $\mathfrak{a}%
	^{\ast }$ the real dual of $\mathfrak{a}$. If $\mathrm{dim}\mathfrak{a}=l$, we
	say that $X$ has rank $l$. We also say that $\alpha \in \mathfrak{a}^{\ast }$ is a root vector, if  
	\begin{equation*}
		\mathfrak{g}^{\alpha }=\left\{ X\in \mathfrak{g}:[H,X]=\alpha (H)X,\text{
			for all }H\in \mathfrak{a}\right\} \neq \left\{ 0\right\} .
	\end{equation*} Denote by $\rho$ the half sum of positive roots, counted with their multiplicities.

	From now on, we assume that $\mathrm{rank}X=1$. Then, $X$
	is one of the following: real hyperbolic space $H^n(\mathbb{R})$, complex hyperbolic space $H^n(\mathbb{C})$,
	quaternionic hyperbolic space $H^n(\mathbb{H})$ or the octonionic hyperbolic plane $H^2(\mathbb{O})$. The constant $\rho$ is, respectively, $(n-1)/2$, $n$, $2n+1$ and $11$. We have the Cartan decomposition 
	\begin{equation}
		G=K\exp \overline{\mathfrak{a}^{+}}K,  \label{kak}
	\end{equation}
	where $\overline{\mathfrak{a}^{+}}\cong[0, +\infty)$. On $X=G/K$, the decomposition $K\exp \overline{\mathfrak{a}^{+}}$ corresponds to polar coordinates. Therefore, each element $g \in G $ is written as $g =k(\exp H)k^{\prime}$, where the component $H\geq 0$ is unique. Define $|g|=H$. Viewed on $X=G/K$, $|g|$ is the distance $d(x, o)$ of $x=gK$ to the origin $o=K$. 
	
	We identify functions on $X = G/K$ with functions on $G$ which are $K$-invariant on the right, and hence bi-$K$-invariant functions on $G$ with functions on $X$ that are $K$-invariant on the left. In the rank one setting, that simply means radial. 
	
	\subsection{The spherical Fourier transform}
	
	Denote by $S(X)^{\#}$ the Schwartz space of radial functions on $X$. The spherical Fourier transform $\mathcal{H}$ is defined
	by 
	\begin{equation*}
		(\mathcal{H}f)(\lambda )=\int_{G}f(x)\varphi _{\lambda }(x)\;dx,\quad
		\lambda \in \mathbb{R},\quad f\in S(X)^{\#},
	\end{equation*}
	where $\varphi _{\lambda }$ are the elementary spherical functions on $G$. 
	Let $S(\mathbb{R})$ be the usual Schwartz space on the real line and let $S(\mathbb{R})^{even}$ be the subspace
	of even functions in $S(\mathbb{R})$. Then, by a
	celebrated theorem of Harish-Chandra, $\mathcal{H}$ is an isomorphism
	between $S(X)^{\#}$ and $S(\mathbb{R})^{even}$. Its inverse
	is given by 
	\begin{equation*}
		(\mathcal{H}^{-1}f)(x)=\text{const.}\int_{\mathbb{R} }f(\lambda )\varphi
		_{-\lambda }(x)\frac{d\lambda }{|\mathbf{c}(\lambda )|^{2}},\quad x\in
		G,\quad f\in  S(\mathbb{R})^{even},
	\end{equation*}
	where $\mathbf{c}(\lambda )$ is the Harish-Chandra function.

	\subsection{The class (R)} As mentioned earlier, the present setting concerns the class (R) of locally symmetric spaces $M=\Gamma \backslash X=\Gamma
	\backslash G/K$, where (i) the critical exponent $\delta(\Gamma)<\rho$, (ii) Poincar{\'e} series is uniformly bounded for any $s>\delta(\Gamma)$, and (iii) $M$ has bounded geometry. In this subsection, we recall some known results concerning the assumptions above.
	
	It is well known by various results by Elstrodt, Patterson, Sullivan and Corlette that when $\delta(\Gamma)<\rho$, the
	bottom of the $L^2$-spectrum of the Laplace-Beltrami operator $-\Delta_M$ on $M$ is equal to $ρ^2$, as on $X$, see for instance \cite{Leuz2004} and the references therein. The latter implies that $M$ is of infinite volume, \cite{JLW}.
	
	Poincar{\'e} series, thus its pointwise estimates, arises naturally in the study of locally symmetric spaces. For the class (R), we  require a uniform upper bound, 
	\begin{equation}\label{poincare_Cocompact}
		\sup_{x, y \in X}P_s(x,y) \leq C(s) <+\infty, \quad \forall s>\delta(\Gamma).
	\end{equation}
Such a uniform upper bound is true, for instance, in the case of convex cocompact groups. More precisely, in \cite[Lemma 3.3]{Zhang18} it was proved that if $\Gamma$ is convex cocompact, then there exists a constant $C > 0$ such that for all $x, y\in X$, and every $s>\delta(\Gamma)$, it holds $P_s(x,y) \leq C\, P_s(o, o)$,
	where $o = eK$ denotes the origin in $X$. The subject of convex cocompact groups has been of extensive study. Without any intention to exhaust the vast literature, we refer to \cite{COR1, Nicholls, Yue, Zhang18} for associated characterizations and interesting results.  
	
	Finally, let us comment on the assumption of bounded geometry. To begin with, recall that the standard notion of injectivity radius on Riemannian manifolds, in the case of $M=\Gamma \backslash X$ boils down to $\text{inj}(M) = \inf_{\tilde{x}\in M}\text{inj}(\tilde{x})$, where
	\[
	\text{inj}(\tilde{x})=\frac{1}{2}\inf\{d(x, \gamma x):\; \gamma \in \Gamma \smallsetminus\{\text{id}\}, \; x\in \pi^{-1}(\tilde{x})\}.
	\]
	Here, $\pi: X \rightarrow \Gamma\backslash X$ denotes the canonical projection. It follows that $\Gamma$ cannot contain parabolic elements. For convex cocompact groups, the injectivity radius is bounded away from zero (this was pointed out to us by J.-Ph. Anker). 
	Furthermore, bounded geometry implies some control on the volume growth of $M$.  For an upper bound, recall that a ball of radius $r$ on $M$ has volume less or equal than the corresponding ball on its universal cover $X$. It follows, using the Cartan decomposition (or see \cite[p.647]{ANDAYA}), that there is a constant $c>0$ such that for all $\tilde{x}\in M$, $r>0$,
	\begin{equation}\label{volume_annulus} 
		|B(\tilde{x},r)| \leq c\, r^{n}e^{2\rho r}.
	\end{equation} For a lower bound concerning volume growth for $M$ in the class (R), we make use of the Faber-Krahn inequality.
	\subsubsection{Faber-Krahn inequality}  
	Given a non-negative non-increasing function $\Lambda$ on $(0, +\infty)$, we say
	that a Riemannian manifold $M$ satisfies the \textit{Faber-Krahn inequality with
		function $\Lambda$} if, for any non-empty relatively compact open set $\Omega\subset M$, 
	\[
	\lambda_{\min}(\Omega)\geq \Lambda(|\Omega|), 
	\]
	where $\lambda_{\min}(\Omega)$ is the first eigenvalue of the Dirichlet problem in $\Omega$ for the Laplace-Beltrami operator $-\Delta_M$
	and $|\Omega|=\text{vol}(\Omega)$, \cite[Section 14.2]{GrigBook}. 
	
	Denote by $h_t$ the heat kernel on a Riemannian manifold. Then, for any $\nu>0$, the following conditions are equivalent:
	\begin{itemize}
		\item[(a)]\label{(a)} The on-diagonal estimate $h_t (x, x) \leq C\, t^{-\frac{\nu}{2}}$, for all $t > 0$ and $x\in M$.
		\item[(b)] The Faber-Krahn inequality with function $\Lambda(v)=c\,v^{-\frac{2}{\nu}}$ where $c>0$,
	\end{itemize}
	\cite[Corollary 14.23]{GrigBook}. Note that a Faber-Krahn inequality with function
	$\Lambda(v)=c\,v^{-\frac{2}{\nu}}$
	implies that, for any relatively compact ball $B(x, r)$,
	\begin{equation}\label{lower_vol}
		|B(x,r)|\geq c(\nu)r^{\nu},
	\end{equation}
	\cite[p.371]{GrigBook}. In the case of bounded geometry, geodesic balls are indeed relatively compact sets, see \cite[pp.312-313]{GrigBook}. 
	
	There is a very rich and long literature concerning heat kernel estimates in various geometric contexts, \cite{GrigBook}. In particular, optimal estimates of the heat kernel have been obtained in \cite{DAVMAN} for real hyperbolic spaces, generalized in  \cite{ANDAYA} for Damek-Ricci spaces (which include all rank one noncompact symmetric spaces) and finally in \cite{ANJ} and  \cite{ANO}
	for arbitrary rank noncompact symmetric spaces.
	
	More precisely, the heat kernel $h_t$ on rank one symmetric spaces is a radial function, that is $h_t(x,y)=h_t(d)$, where $d=d(x,y)$ is the geodesic distance on $X$. The following (upper and lower) estimate holds:
	\begin{equation} \label{heat_est1}
		h_t(d) \asymp t^{-\frac{3}{2}}(1+d)\left(1+\frac{1+d}{t}\right)^{\frac{n-3}{2}} e^{-\rho^2 t-\rho d-\frac{d^2}{4t}}.
	\end{equation}
	Note that \eqref{heat_est1} implies the upper bound 
	\begin{equation}\label{heat_est2}
		h_t(d)\leq c(\varepsilon)\, t^{-\frac{n}{2}}e^{-(\rho-\varepsilon)d} \quad \text{ for all } t, d>0,
	\end{equation}
	for any $0<\varepsilon<\rho$.
	
	The heat kernel $h_t^M$ on $M=\Gamma \backslash X$ is given by
	\[
	h_t^M(\tilde{x}, \tilde{y})=\sum_{\gamma \in \Gamma }h_t(x, \gamma y),
	\] 
	\cite{COR1}. Therefore, when $\delta(\Gamma)<\rho$, we have by (\ref{poincare_Cocompact}) and (\ref{heat_est2}) that
	\begin{equation}
		h_t^M(\tilde{x}, \tilde{y}) \leq c(\varepsilon)\, t^{-\frac{n}{2}} \sum_{\gamma \in \Gamma }e^{-(\rho-\varepsilon)d(x, \gamma y)} \notag\\
		\leq c(\varepsilon, \rho)\, t^{-\frac{n}{2}}, \label{FKcond}
	\end{equation}
	taking $\varepsilon$ small enough so that $\rho-\varepsilon>\delta(\Gamma)$.
	
	Thus, according to the equivalent conditions (a) and (b),  the locally symmetric space $M=\Gamma\backslash X$ in the class (R) satisfies the Faber-Krahn inequality with $\Lambda(v)=c\, v^{-\frac{2}{n}}$. Hence, the lower bound  (\ref{lower_vol}) holds for $\nu=n$. 
	
	It follows from (\ref{volume_annulus}) and (\ref{lower_vol}) for $\nu=n$ that, for locally symmetric spaces in the class (R), small balls are essentially euclidean:
	\begin{equation}\label{small_balls}
		|B_M(\tilde{x},r)|\asymp r^n, \quad \text{ for all } \tilde{x}\in M \; \text{ and } 0<r<1. 
	\end{equation}

	\section{Proof of Theorem \ref{Thm}}
	As mentioned earlier, our goal is to derive results for $M$, analogous to the ones obtained in (\cite{GIUMA}) for the covering space $X$.
	
	For that, in this section we study the boundedness of the maximal operator 
	\begin{equation}\label{maximal}
		\widehat{S}_{\ast}^zf(x)=\sup\limits_{R\geq \rho^2}|\widehat{S}_R^zf(x)|, \quad \text{for }f\in L^p(M), \quad 1\leq p\leq 2. 
	\end{equation}
	
	Our aim is to prove that $\widehat{S}_{\ast}^z$ maps $L^1(M)$ to $(L^{1,w}+L^{r})(M)$, for all $r=r(M)$ large enough, which is our main result in Theorem \ref{thmmapping}. 
	Then, taking into account the $L^2(M)$ boundedness result of Lemma \ref{S*L2}, Theorem \ref{Thm} follows by intepolation and well-known measure theoretic arguments, see for example \cite[Theorem 2.1.14]{GRAF}.
	
	The Riesz means kernel $\kappa_R^z$ on $X$ is given by 
	\begin{equation*}
		\kappa_R^z(\exp H)=\mathcal{H}^{-1}\left(\left(1-\frac{\lambda^2+\rho^2}{R}\right)^z_{+}\right)\left(\exp H\right), \quad \lambda \in \mathbb{R},\;H \in \mathbb{R}_{+}. 
	\end{equation*}
	Using this formula and the expression of the inverse spherical Fourier transform $\mathcal{H}^{-1}$ in the case of rank one symmetric spaces, Giulini and Mauceri in \cite[Corollary 3.7]{GIUMA} obtained the following estimate of $\kappa_R^z$: 
	\begin{equation}\label{estimate}
		|\kappa_R^z(\exp H)| \leq c(z)R^{n/2}(1+\sqrt{R}H)^{-\mathrm{Re}z-(n+1)/2}(1+H)^{(n-1)/2}e^{-\rho H},
	\end{equation}
	where $c(z)$ is some constant which grows at most exponentially in
	$\mathrm{Im}z$ when $\mathrm{Re}z$  is in a bounded subset of $(0,+\infty)$.
	
	Recall now that the Riesz means operator on $M$ is initially defined as a convolution operator on $G$,
	\begin{equation}\label{33}
		(\widehat{S}_R^zf)(x)=\int_G\kappa_R^z(y^{-1}x)f(y)dy, \quad f\in C_0^{\infty}(M).
	\end{equation}
	Set $\kappa_R^z(x,y) =\kappa_R^z(y^{-1}x) $ and 
	\begin{equation}\label{serieskernel}
		\widehat{\kappa}_R^z(\tilde{x},\tilde{y})=\sum_{\gamma \in \Gamma}\kappa_R^z(x,\gamma y), 
	\end{equation}
	where $\tilde{x}=\pi(x)$ and $\pi:X\rightarrow M$ denotes the covering map.
	We shall first prove the following result.
	\begin{proposition}
		If $M\in (R)$, then the series (\ref{serieskernel}) converges and the Riesz means operator $\widehat{S}^z_R$ on $M$ is given by 
		\begin{equation}\label{35}
			(\widehat{S}_R^z f) (\tilde{x}) =\int_M\widehat{\kappa}^z_R(\tilde{x},\tilde{y}) f (\tilde{y})d\tilde{y}.
		\end{equation}
	\end{proposition}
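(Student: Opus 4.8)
The plan is to reduce the statement to an absolute convergence estimate for the series $\sum_{\gamma\in\Gamma}|\kappa_R^z(x,\gamma y)|$, which both makes $\widehat\kappa_R^z$ well-defined and justifies the unfolding of the integral over $M$ to an integral over $X$. First I would fix representatives and recall that for $f\in C_0^\infty(M)$, lifted to a left-$\Gamma$-invariant, right-$K$-invariant function on $G$, formula (\ref{33}) defines $\widehat S_R^zf$; the goal is to rewrite $\int_G \kappa_R^z(y^{-1}x)f(y)\,dy$ as $\int_{\Gamma\backslash G}\big(\sum_{\gamma\in\Gamma}\kappa_R^z((\gamma y)^{-1}x)\big)f(y)\,d\dot y$ by the standard unfolding $\int_G = \int_{\Gamma\backslash G}\sum_{\gamma\in\Gamma}$, valid once the inner sum converges absolutely and locally uniformly. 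Since $f$ has compact support on $M$, it suffices to prove that for $x$ in a compact set and $y$ ranging over a compact set (a fundamental domain piece), the series $\sum_{\gamma\in\Gamma}|\kappa_R^z(x,\gamma y)|$ converges.

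The key estimate is (\ref{estimate}): for $H\in\overline{\mathfrak a_+}$,
\[
|\kappa_R^z(\exp H)|\le c(z)R^{n/2}(1+\sqrt R\|H\|)^{-\operatorname{Re}z-(n+1)/2}(1+\|H\|)^{(n-1)/2}e^{-\rho\|H\|}.
\]
With $\|H\|=d(x,\gamma y)$, and bounding the polynomial factors crudely by $c\,e^{\varepsilon d(x,\gamma y)}$ for any fixed small $\varepsilon>0$, we get
\[
|\kappa_R^z(x,\gamma y)|\le c(z,R,\varepsilon)\,e^{-(\rho-\varepsilon)d(x,\gamma y)}.
\]
Now I would invoke condition (i) of class (R), namely $\delta(\Gamma)<\rho$: choose $\varepsilon$ so small that $\rho-\varepsilon>\delta(\Gamma)$ still holds, and then apply the uniform bound (\ref{uniform}) (which rests on the Lemma from \cite[Lemma 6]{FOMAMA} together with (\ref{asymptotics2})) with $s=\rho-\varepsilon$ to conclude $\sum_{\gamma\in\Gamma}|\kappa_R^z(x,\gamma y)|\le c(z,R)<\infty$, uniformly in $x,y\in X$. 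This gives absolute and locally uniform convergence of (\ref{serieskernel}), so $\widehat\kappa_R^z(\tilde x,\tilde y)$ is a well-defined, $\Gamma$-bi-invariant (hence descending to $M\times M$) continuous kernel.

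Having convergence, the identity (\ref{35}) follows by the unfolding computation: for $f\in C_0^\infty(M)$,
\begin{align*}
\widehat S_R^zf(\tilde x)&=\int_G\kappa_R^z(y^{-1}x)f(y)\,dy
=\int_{\Gamma\backslash G}\sum_{\gamma\in\Gamma}\kappa_R^z((\gamma y)^{-1}x)f(\gamma y)\,d\dot y\\
&=\int_{\Gamma\backslash G}\Big(\sum_{\gamma\in\Gamma}\kappa_R^z(x,\gamma y)\Big)f(y)\,d\dot y
=\int_M\widehat\kappa_R^z(\tilde x,\tilde y)f(\tilde y)\,d\tilde y,
\end{align*}
where we used left-$\Gamma$-invariance of $f$, the absolute convergence just established to interchange sum and integral (Fubini--Tonelli), right-$K$-invariance to pass between $G$, $G/K$ and $M$, and the definitions $\kappa_R^z(x,\gamma y)=\kappa_R^z((\gamma y)^{-1}x)$ and $\widehat\kappa_R^z=\sum_\gamma\kappa_R^z(x,\gamma y)$. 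The main obstacle is purely the convergence step; once $\delta(\Gamma)<\rho$ is used together with the exponential decay $e^{-\rho\|H\|}$ built into (\ref{estimate}) and the counting bound (\ref{uniform}), everything else is a formal manipulation of integrals. One should be slightly careful that the polynomial-in-$\|H\|$ factors in (\ref{estimate}) are genuinely absorbed by an arbitrarily small exponential, and that the resulting bound, though $R$-dependent, is finite for each fixed $R$, which is all that is needed here; uniformity in $R$ is not claimed in this Proposition and is handled later via the maximal operator.
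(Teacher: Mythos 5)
Your proposal is correct and follows essentially the same route as the paper: absorb the polynomial factor $(1+\|H\|)^{(n-1)/2}$ in (\ref{estimate}) into $e^{\epsilon\|H\|}$ with $0<\epsilon<\rho-\delta(\Gamma)$, reduce convergence of (\ref{serieskernel}) to that of the Poincar\'e series $P_{\rho-\epsilon}(x,y)$ (equivalently, the uniform bound (\ref{uniform})), and then obtain (\ref{35}) by unfolding via Weyl's formula. The only cosmetic difference is that the paper retains the factor $(1+\sqrt{R}\,d_M(\tilde{x},\tilde{y}))^{-\operatorname{Re}z-(n+1)/2}$ in its bound (\ref{(3)}) for later use in the maximal-operator estimates, whereas you absorb it into an $R$-dependent constant, which suffices for this Proposition.
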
 
	\begin{proof}
		Use the Cartan decomposition and write $(\gamma y)^{-1}x =k \exp H_{\gamma} k_{\gamma}^{\prime}$. Note that $d(x, \gamma y)=H_{\gamma}$. Then, since $\kappa_{R}^z$ is $K$-bi-invariant, we have $\kappa_{R}^z((\gamma y)^{-1}x) =\kappa_{R}^z(\exp H_\gamma )$. The distance on $M$ is defined by
		\begin{equation}\label{distM}
			d_M(\tilde{x},\tilde{y})=\inf\limits_{\gamma \in \Gamma} d(x,\gamma y).
		\end{equation}
		
		Recall that $\delta(\Gamma)<\rho$. Then, estimate (\ref{estimate})  implies that  for any $0<\varepsilon <\rho-\delta(\Gamma)$,
		\begin{align}
			|\widehat{\kappa}_{R}^{z}(\tilde{x},\tilde{y})|&\leq c(z)R^{n/2}\sum_{ \gamma \in \Gamma}(1+\sqrt{R}H_{\gamma})^{-\mathrm{Re}z-(n+1)/2}(1+H_{\gamma})^{(n-1)/2}e^{-\rho H_{\gamma}} \label{original} \\
			&\leq c(z)R^{n/2}\sum_{ \gamma \in \Gamma} (1+\sqrt{R}H_{\gamma})^{-\mathrm{Re}z-(n+1)/2}e^{-(\rho-\varepsilon) H_{\gamma}} \notag \\
			&\leq c(z)R^{n/2} (1+\sqrt{R}d_M(\tilde{x}, \tilde{y}))^{-\mathrm{Re}z-(n+1)/2}\sum_{ \gamma \in \Gamma}e^{-(\rho-\varepsilon) H_{\gamma}} \label{(3)}
		\end{align}
		where we used (\ref{distM}). Thus, it remains to prove (\ref{35}). Since $\kappa_{R}^z$ and $f$ are right-$K$-invariant, from (\ref{33}) we get that
		\begin{equation*}
			(\widehat{S}_R^zf)(x)=\int_{X}\kappa_R^z(x,y)f(y)dy.
		\end{equation*}
		Finally, since $f$ is left $\Gamma$-invariant, by Weyl’s formula we find that
		\begin{align*}
			(\widehat{S}_R^zf)(x)&=\int_{X}\kappa_R^z(x,y)f(y)dy=\int_{\Gamma \backslash X}\left( \sum_{ \gamma \in \Gamma} \kappa_R^z(x,\gamma y)f(\gamma y)\right)d\tilde{y}\\&=\int_M \widehat{\kappa}_R^z(\tilde{x},\tilde{y})f(\tilde{y})d\tilde{y}.
		\end{align*}
	\end{proof}

	Next, we proceed as in \cite[Lemma 4.1]{GIUMA}, and we prove the following lemma.
	\begin{lemma}\label{S*L2}  If $f\in L^2(M)$ and $z\in \mathbb{C}$ with $\mathrm{Re}z>0$,
		then $\|\widehat{S}_{\ast}^zf\|_2\leq c(z)\|f\|_2$.
	\end{lemma}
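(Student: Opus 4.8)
The standard route for a maximal-function $L^2$ bound of this type is via the spectral theorem and a square-function (Littlewood--Paley) argument, exploiting that $s_R^z$ is a multiplier of the Laplace--Beltrami operator. First I would pass to the spectral side: since the spherical transform diagonalizes convolution, and since the whole construction descends to $M$, one has $\widehat{S}_R^z = s_R^z(\sqrt{\Delta_M})$ in the functional calculus of the (nonnegative) Laplacian $\Delta_M$ on $M$, with $s_R^z(\mu) = (1 - \mu^2/R)_+^z$ (after shifting by $\rho^2$, which only helps because the bottom of the $L^2$-spectrum on $M$ is $\geq \rho^2$ when $\delta(\Gamma) < \rho$ — indeed $\delta(\Gamma)<\rho$ forces $L^2$-spectral gap, so $\Delta_M \geq \rho^2$ and the symbol is well-defined and bounded). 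Because $\Delta_M$ is self-adjoint on $L^2(M)$ with a spectral resolution $dE_\mu$, I would write $\widehat{S}_R^z f = \int s_R^z(\mu)\, dE_\mu f$ and estimate the supremum over $R$ pointwise.

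\textbf{Main steps.} The key inequality is the classical one controlling a maximal multiplier by a square function built from the derivative of the symbol: since $s_R^z(\mu)$ is, for each fixed $\mu$, absolutely continuous in $R$ and vanishes as $R\to\infty$ is not quite true — rather $s_R^z(\mu)\to 1$ — I would instead bound $\sup_R |\widehat S_R^z f| \le |f|$-type term plus a fluctuation term, or more cleanly use the representation
\begin{equation*}
\widehat{S}_R^z f = -\int_R^\infty \partial_t \big(s_t^z(\sqrt{\Delta_M})\big) f \, dt + \lim_{t\to\infty} s_t^z(\sqrt{\Delta_M}) f,
\end{equation*}
where the limit equals $f$ in $L^2$. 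Then $\sup_R|\widehat S_R^z f| \le |f| + \int_0^\infty |\partial_t(s_t^z(\sqrt{\Delta_M}))f|\, dt$, and by Cauchy--Schwarz against the density $t\,dt$ (or $dt/t$, chosen so that $\int |\partial_t s_t^z(\mu)|^2 t\,dt \le c(z)$ uniformly in $\mu$, which holds precisely because $\mathrm{Re}\,z>0$ — the singularity of $\partial_t (1-\mu^2/t)^z_+$ at $t=\mu^2$ is integrable in the square after weighting) one reduces to
\begin{equation*}
\Big\| \Big(\int_0^\infty |\partial_t(s_t^z(\sqrt{\Delta_M}))f|^2\, \tfrac{dt}{t}\Big)^{1/2}\Big\|_2^2 = \int_0^\infty \int_0^\infty |\partial_t s_t^z(\mu)|^2 \tfrac{dt}{t}\, d\|E_\mu f\|^2 \le c(z)\|f\|_2^2,
\end{equation*}
using Fubini and the spectral theorem in the last equality. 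The scalar bound $\sup_\mu \int_0^\infty |\partial_t s_t^z(\mu)|^2\,\frac{dt}{t} \le c(z)$ is an elementary one-variable computation after the substitution $t = \mu^2/u$.

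\textbf{Alternative / what I expect the authors do.} Since the lemma is stated as "we proceed as in \cite{GIUMA}", the intended proof likely transfers the symmetric-space estimate directly: on $X$ the operator $S_R^z$ is bounded on $L^2(X)$ with $\sup_R$ controlled, by Giulini--Mauceri, because $\|s_R^z\|_{\infty}$ and the relevant square-function norm are finite for $\mathrm{Re}\,z>0$; and the descent $X \to M$ together with the spectral gap $\delta(\Gamma)<\rho$ means the $L^2(M)$ spectral measure of $\Delta_M$ is supported where the symbol behaves well, so the same scalar multiplier bound applies verbatim via the functional calculus of $\Delta_M$. Concretely: $\|\widehat{S}_*^z f\|_2 \le \big(\sup_{\mu \ge 0}\, \text{(maximal symbol square function of }s^z\text{)}\big)\|f\|_2$, and that supremum is exactly the quantity already estimated on $X$.

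\textbf{Main obstacle.} The genuine point is not the scalar computation but justifying the spectral-calculus formalism on $M = \Gamma\backslash X$: one must know $\widehat{S}_R^z$ really is $s_R^z(\sqrt{\Delta_M})$ as an $L^2$-bounded operator, that $\Delta_M$ is essentially self-adjoint on $C_0^\infty(M)$ (true on any complete manifold), and that the $L^2$-spectrum lies in $[\rho^2,\infty)$ so the symbol is bounded there — this last is where condition (i), $\delta(\Gamma)<\rho$, enters. Granting that, the maximal bound is the standard Stein-type square-function argument and the constant $c(z)$ blows up as $\mathrm{Re}\,z\downarrow 0$, consistent with the hypothesis $\mathrm{Re}\,z>0$.
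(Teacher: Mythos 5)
Your main square-function argument has two concrete gaps. First, the Cauchy--Schwarz step is invalid as written: bounding $\int_0^\infty|\partial_t(s_t^z(\sqrt{\Delta_M}))f|\,dt$ by $\bigl(\int_0^\infty|\partial_t(\cdots)f|^2\,t\,dt\bigr)^{1/2}\bigl(\int_0^\infty t^{-1}\,dt\bigr)^{1/2}$ produces a divergent second factor; the classical repair is to apply the fundamental theorem of calculus to $|F(R)|^2$ and control it by a \emph{product} of two different square functions, one of which forces you to subtract an approximate identity from $s_t^z$ so that $\int_0^\infty|s_t^z(\mu)|^2\,dt/t$ converges at $t\to\infty$. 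Second, and more seriously, the scalar bound you invoke is false in the claimed range: with $a=\mu^2$ one has $\partial_t(1-a/t)_+^z=z(1-a/t)^{z-1}at^{-2}$, hence $\int_0^\infty|\partial_t s_t^z(\mu)|^2\,t\,dt=|z|^2\int_0^1(1-u)^{2\operatorname{Re}z-2}u\,du$, which converges only for $\operatorname{Re}z>1/2$. The edge singularity $(1-\mu^2/t)^{z-1}$ at $t=\mu^2$ is not square integrable when $0<\operatorname{Re}z\le 1/2$, so this route yields the lemma only for $\operatorname{Re}z>1/2$, not for all $\operatorname{Re}z>0$.

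The paper's proof (following Giulini--Mauceri) avoids the square function entirely. It subtracts the heat semigroup $\widehat H_R$, whose maximal operator is bounded on $L^2(M)$ by Stein's maximal theorem for symmetric diffusion semigroups, using $\|\widehat H_R\|_{L^2\to L^2}=e^{-\lambda_0 R}\le 1$ from Lohou\'e--Marias. For the difference it uses the Mellin transform of the compensated symbol to write $(\widehat S_R^z-\widehat H_R)f=\int_{\mathbb R}c(z,s)R^{-is}(-\Delta_M)^{is}f\,ds$ with $|c(z,s)|\le c(z)(1+|s|)^{-(\operatorname{Re}z+1)}$. The point is that $R$ enters only through the unimodular factor $R^{-is}$, so the supremum over $R$ passes inside the integral for free, and Minkowski's inequality together with $\|(-\Delta_M)^{is}\|_{L^2\to L^2}\le 1$ finishes the proof; the hypothesis $\operatorname{Re}z>0$ is exactly the integrability of $(1+|s|)^{-1-\operatorname{Re}z}$ in $s$. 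Your ``alternative'' paragraph correctly guesses that the Giulini--Mauceri argument transfers through the functional calculus of $\Delta_M$, but it identifies neither the heat-semigroup subtraction nor the imaginary-powers device, and a bound on a ``maximal symbol square function'' does not by itself control $\sup_R$ of a spectral multiplier.
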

	\begin{proof}
		Let $\widehat{H}_R=e^{R\Delta_M}$ be the heat semigroup on $M$, where $\Delta_M$ is the Laplace-Beltrami operator on $M$. By the spectral theorem and the fact that the bottom of the spectrum on $M\in (R)$ is $\rho^2$, it holds $\|\widehat{H}_R\|_{L^2(M)\rightarrow L^2(M)} =e^{-\rho^2 R} \leq 1$. Thus, by \cite[Chapter III, MAXIMAL THEOREM]{STEINred}, the heat maximal operator $f\rightarrow \widehat{H}_{\ast}f:=\sup_{R>0}|\widehat{H}_Rf|$ is bounded on $L^2 (M)$. Thus, it suffices to prove the boundedness of $(\widehat{S}^z-\widehat{H})_{\ast}$. Using the spectral theorem for $\Delta_M$ and the Mellin transform we have
		\begin{equation}\label{mellin}
			(\widehat{S}^z_R-\widehat{H}_R)f=\int_{\mathbb{R}}c(z,s)R^{-is}(-\Delta_M	)^{is}fds, 
		\end{equation}
		where $|c(z,s)|\leq c(z)(1+|s|)^{-(\mathrm{Re}z+1)}$ \cite{GIUMA}. So, the integral in (\ref{mellin}) converges. Since $L^2(M)$ is a complete Banach lattice, from \cite{COW}, we can write
		\[\
		(\widehat{S}^z-\widehat{H})_{\ast}f=\sup_{R>0}|(\widehat{S}_R^z-\widehat{H}_R)f|\leq c(z) \int_{\mathbb{R}} (1+|s|)^{-(\mathrm{Re}z+1)}|(-\Delta_M)^{is}f|ds.
		\]
		Thus, since by the spectral theorem $\|(-\Delta_M)^{is}\|_{L^2(M)\rightarrow L^2(M)}\leq 1$, we obtain
		\[\
		\|(\widehat{S}^z-\widehat{H})_{\ast}f\|_{L^2(M)}\leq c(z)\|f\|_{L^2(M)}.
		\]
	\end{proof}
	Our main result is the following, corresponding to \cite[Lemma 4.2]{GIUMA}.
	
	\begin{theorem}\label{thmmapping}
		Let $\mathrm{Re}z> (n - 1)/2$ and $M\in (R)$. Consider $q_0=q_0(M)=2\rho/\varepsilon$, where $0<\varepsilon <\rho-\delta(\Gamma)$. Then, for all $1 < p \leq q_0^{\prime}$, the operator $\widehat{S}_{\ast}^z$ maps $L^p (M)$ continuously into $(L^p + L^r)(M)$ for every $r \in [q_0p^{\prime} /(p^{\prime}-q_0), + \infty]$. Moreover, $\widehat{S}_{\ast}^z$ maps $L^1(M)$ continuously into $(L^{1,w}+L^r)(M)$ for every $r\in[q_0, +\infty]$. 
	\end{theorem}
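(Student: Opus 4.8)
The strategy is to split the maximal operator into a \emph{local} part and a \emph{part at infinity}, according to the distance on $M$. Fix a smooth cutoff and write $\widehat{\kappa}_R^z = \widehat{\kappa}_R^{z,0} + \widehat{\kappa}_R^{z,\infty}$, where $\widehat{\kappa}_R^{z,0}$ is supported in $\{d_M(\tilde x,\tilde y)\le 1\}$ and $\widehat{\kappa}_R^{z,\infty}$ in $\{d_M(\tilde x,\tilde y)\ge 1/2\}$; correspondingly $\widehat{S}_\ast^z \le \widehat{S}_\ast^{z,0} + \widehat{S}_\ast^{z,\infty}$. For the local part I would show that the maximal operator $\widehat{S}_\ast^{z,0}$ is pointwise dominated by the Hardy--Littlewood maximal function $\mathcal{M}_M$ on $M$. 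Here one uses estimate (\ref{estimate}) together with (\ref{(3)}) (the series (\ref{serieskernel}) being controlled by the distance on $M$ thanks to $\delta(\Gamma)<\rho$), so that for $d_M(\tilde x,\tilde y)\le 1$ one gets $|\widehat{\kappa}_R^{z,0}(\tilde x,\tilde y)| \le c(z) R^{n/2}(1+\sqrt R\, d_M(\tilde x,\tilde y))^{-\operatorname{Re}z-(n+1)/2}$; since $\operatorname{Re}z>(n-1)/2$ the exponent exceeds $n$, the kernel is an approximate identity at scale $R^{-1/2}$ on a manifold that is locally doubling (with the Euclidean local dimension $n$), and the standard argument gives $\widehat{S}_\ast^{z,0} f \le c(z)\,\mathcal{M}_M f$. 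Consequently $\widehat{S}_\ast^{z,0}$ is bounded on $L^p(M)$ for $1<p\le\infty$ and of weak type $(1,1)$.

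For the part at infinity I would estimate the kernel brutally and sum. From (\ref{estimate}), for $d_M(\tilde x,\tilde y)\ge 1/2$ one has, taking the supremum in $R$ of $R^{n/2}(1+\sqrt R\,\|H\|)^{-\operatorname{Re}z-(n+1)/2}$ (which is $\le c\,\|H\|^{-n}$ when $\operatorname{Re}z>(n-1)/2$, the optimal $R$ being of order $\|H\|^{-2}$), and incorporating the factor $(1+\|H\|)^{(n-1)/2}e^{-\rho\|H\|}$ absorbed into $e^{-(\rho-\epsilon)\|H\|}$,
\begin{equation}\label{supkernelinf}
\sup_{R>0}|\widehat{\kappa}_R^{z,\infty}(\tilde x,\tilde y)| \le c(z)\sum_{\gamma\in\Gamma} \|H_\gamma\|^{-n} e^{-(\rho-\epsilon)\|H_\gamma\|}\mathbf{1}_{\{\|H_\gamma\|\ge 1/2\}} \le c(z)\, e^{-(\rho-\epsilon)d_M(\tilde x,\tilde y)}\,\Phi(\tilde x,\tilde y),
\end{equation}
where, by (\ref{uniform}), the residual sum $\Phi$ is uniformly bounded provided $\rho-\epsilon>\delta(\Gamma)$, which is legitimate since $\delta(\Gamma)<\rho$. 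So $\widehat{S}_\ast^{z,\infty}f(\tilde x)\le c(z)\int_M e^{-(\rho-\epsilon)d_M(\tilde x,\tilde y)}|f(\tilde y)|\,d\tilde y$. Now I would analyze this global convolution-type operator using the volume growth of balls in $M$: on $\Gamma\backslash X$ one has $\operatorname{vol}(B_M(\tilde x,t))\le c\,e^{\delta(\Gamma)t}$ for large $t$ (again from (\ref{asymptotics2})), while the kernel decays like $e^{-(\rho-\epsilon)t}$. Choosing $\epsilon$ so that $\rho-\epsilon>\delta(\Gamma)$, Schur-type / Kunze--Stein-free estimates give that $g\mapsto \int_M e^{-(\rho-\epsilon)d_M(\cdot,\tilde y)}g(\tilde y)\,d\tilde y$ maps $L^p(M)\to L^r(M)$ for a suitable range of $r$; more precisely one writes the kernel as a superposition over dyadic annuli $e^{-(\rho-\epsilon)2^{j}}\mathbf{1}_{\{d_M\sim 2^j\}}$, estimates each annulus piece by Young's inequality on the (at most) $e^{\delta(\Gamma)2^j}$-sized balls, and sums the geometric series. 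This produces the $L^p\to L^p+L^r$ (resp. $L^1\to L^{1,w}+L^r$) mapping with the exponent range $r\in[qp'/(p'-q),+\infty]$ for the $q>2$ coming from the interplay of $\delta(\Gamma)$ and $\rho$ (concretely $q$ should be essentially $2\rho/\delta(\Gamma)$ or the like, which is $>2$ exactly because $\delta(\Gamma)<\rho$).

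The main obstacle, and the place I would spend the most care, is the bookkeeping in the part at infinity: one must track how the two sums over $\Gamma$ interact — the one defining $\widehat{\kappa}_R^z$ itself (handled by (\ref{(3)}) and (\ref{uniform})) and the one implicit in integrating over the fundamental domain — and extract from (\ref{asymptotics}) the \emph{uniform} volume bound $\operatorname{vol}(B_M(\tilde x,t))\le c e^{\delta(\Gamma)t}$ with a constant independent of $\tilde x$, which is what feeds the Young/Schur estimate annulus by annulus and pins down the precise endpoint $r=qp'/(p'-q)$. The supremum over $R$ is the easy structural point: since (\ref{estimate}) factors as $R^{n/2}$ times a decreasing function of $\sqrt R\|H\|$ times an $R$-independent tail, $\sup_{R>0}$ is computed pointwise and costs nothing beyond the elementary one-variable optimization $\sup_{R>0}R^{n/2}(1+\sqrt R t)^{-\operatorname{Re}z-(n+1)/2}\asymp t^{-n}$ valid precisely when $\operatorname{Re}z>(n-1)/2$, which is exactly the hypothesis of the theorem and the reason the threshold $(n-1)(1/p-1/2)$ appears after interpolation with Lemma \ref{S*L2}.
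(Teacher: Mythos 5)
Your decomposition (local part dominated by the Hardy--Littlewood maximal function, part at infinity bounded by an integral operator with an exponentially decaying kernel) is the same as the paper's, and your treatment of the local part and of the supremum in $R$ is essentially identical to the argument actually given. The problem is in the part at infinity, where your load-bearing quantitative claim is false: the counting bound (\ref{asymptotics2}) does \emph{not} give $\operatorname{vol}(B_M(\tilde x,t))\leq c\,e^{\delta(\Gamma)t}$. The counting function controls the number of orbit points in a ball of $X$, not the volume of a ball in the quotient; the only general bound is $\operatorname{vol}(B_M(\tilde x,t))\leq \operatorname{vol}(B_X(x,t))\leq c\,e^{2\rho t}$. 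For instance $\Gamma=\{e\}$ satisfies (\ref{asymptotics}) with $\delta(\Gamma)=0<\rho$, yet $M=X$ has volume growth $e^{2\rho t}$. Consequently your annulus-by-annulus Young estimate, which needs the kernel decay $e^{-(\rho-\epsilon)t}$ to beat a volume growth $e^{\delta(\Gamma)t}$, does not close as written, and your heuristic $q\approx 2\rho/\delta(\Gamma)$ misidentifies where the exponent $q$ comes from.

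The correct bookkeeping, which is what the paper does, is this. First, $\delta(\Gamma)<\rho$ is used only to make the Poincar\'e series $P_{\rho-\epsilon}(x,y)$ converge and, via (\ref{poincaresum}) and (\ref{asymptotics2}), to reduce its $q$-th power (by H\"older over the finite set $\Gamma_A$, whose cardinality is a fixed constant) to a single sum $\sum_{\gamma}e^{-q(\rho-\epsilon)d(x,\gamma y)}$; this is (\ref{43}). Second, the integral over $M$ of that sum is unfolded by Weyl's formula to $\int_X e^{-q(\rho-\epsilon)d(x,y)}dy$, and it is the volume growth $e^{2\rho r}$ of $X$ itself, via (\ref{modular}) and (\ref{rightinv}), that makes this finite precisely when $\epsilon<(q-2)\rho/q$; this is (\ref{44}). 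Thus $P_{\rho-\epsilon}(x,\cdot)\in L^q(M)$ uniformly in $x$ for \emph{every} $q>2$ (choosing $\epsilon$ accordingly), and Young's inequality then yields the stated range $r\in[qp'/(p'-q),+\infty]$. Your factorization $\sup_R|\widehat\kappa_R^{z,\infty}|\leq c\,e^{-(\rho-\epsilon)d_M(\tilde x,\tilde y)}\Phi$ with $\Phi$ bounded is in fact salvageable (again via (\ref{poincaresum}), since each of the boundedly many terms over $\Gamma_A$ is at most $e^{-(\rho-\epsilon)d_M}$), but the subsequent integration must be unfolded to $X$, not estimated on balls of $M$ with an $e^{\delta(\Gamma)t}$ growth that does not hold.
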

	\begin{proof}
		We have
		\begin{align*}
			\widehat{S}_R^zf(\tilde{x})&=\int_{M}\widehat{\kappa}^z_R(\tilde{x}, \tilde{y})f(\tilde{y})d\tilde{y}\\
			&=\int_{B_M(\tilde{x},1)}\widehat{\kappa}^z_R(\tilde{x}, \tilde{y})f(\tilde{y})d\tilde{y}+\int_{B_M(\tilde{x},1)^c}\widehat{\kappa}^z_R(\tilde{x}, \tilde{y})f(\tilde{y})d\tilde{y}:=I_1+I_2.
		\end{align*}
		\textit{Estimates for $I_1$.} 	Let ${N}_{\psi}$ denote the maximal operator
		\begin{equation}\label{psi} \mathcal{N}_{\psi}f(\tilde{x})=\sup\limits_{t\in (0,1)}\psi(t)\int\limits_{B_M(\tilde{x},t)} |f (\tilde{y})|d\tilde{y}, \quad \psi(t)=t^{-n}, \quad t>0.
		\end{equation}
		Recall the kernel estimate (\ref{(3)}), which holds for any $0<\varepsilon<\rho-\delta(\Gamma)$. Using the uniform estimate of Poincar{\'e} series (\ref{poincare_Cocompact}), it follows that for some $c=c(z, \rho, \varepsilon)$ that
		\begin{align}
			|I_1|&\leq \int_{B_M(\tilde{x},1)}|\widehat{\kappa}_{R}^z(\tilde{x}, \tilde{y})||f(\tilde{y})|d\tilde{y} \notag \\
			&\leq c \, R^{n/2} \int_{B_M(\tilde{x},1)}(1+\sqrt{R}d_M(\tilde{x}, \tilde{y}))^{-\mathrm{Re}z-(n+1)/2}|f(\tilde{y})|d\tilde{y}\notag\\
			&= c \int_{B_M(\tilde{x},1)}d_M(\tilde{x}, \tilde{y})^{-n} \frac{(\sqrt{R}d_M(\tilde{x}, \tilde{y}))^n}{(1+\sqrt{R}d_M(\tilde{x}, \tilde{y}))^{\mathrm{Re}z+(n+1)/2}}|f(\tilde{y})|d\tilde{y} \notag \\
			&= c \sum\limits_{\nu=-\infty}^0\int_{2^{\nu-1}<d_M(\tilde{x},\tilde{y})\leq  2^{\nu}}d_M(\tilde{x}, \tilde{y})^{-n} \frac{(\sqrt{R}d_M(\tilde{x}, \tilde{y}))^n}{(1+\sqrt{R}d_M(\tilde{x}, \tilde{y}))^{\mathrm{Re}z+(n+1)/2}}|f(\tilde{y})|d\tilde{y} \notag \\
			&\leq c \sum\limits_{\nu=-\infty}^0\int_{2^{\nu-1}<d_M(\tilde{x},\tilde{y})\leq  2^{\nu}} 2^{-n(\nu-1)}\frac{(\sqrt{R}2^{\nu})^{n}}{(1+\sqrt{R}2^{\nu-1})^{\mathrm{Re}z+(n+1)/2}}|f(\tilde{y})|d\tilde{y}\notag \\
			&\leq c \sum\limits_{\nu=-\infty}^0 \frac{(\sqrt{R}2^{\nu})^{n}}{(1+\sqrt{R}2^{\nu-1})^{\mathrm{Re}z+(n+1)/2}}\psi(2^{\nu})\int_{d_M(\tilde{x},\tilde{y})\leq 2^{\nu}}|f(\tilde{y})|d\tilde{y}\notag\\
			&\leq c \mathcal{N}_{\psi}f(\tilde{x})\sum\limits_{\nu=-\infty}^0 \frac{(\sqrt{R}2^{\nu})^{n}}{(1+\sqrt{R}2^{\nu-1})^{\mathrm{Re}z+(n+1)/2}}.\label{seira},
		\end{align}
		where for the last two lines we used (\ref{psi}). Let $2^k<\sqrt{R}\leq 2^{k+1}$, for some $k\in \mathbb{Z}$. Thus, $\sqrt{R}=2^{k+s}$, for some $s \in (0,1]$. Then, note that 
		\begin{align}
			\sum\limits_{\nu=-\infty}^0\frac{(\sqrt{R}2^{\nu})^{n}}{(1+\sqrt{R}2^{\nu-1})^{\mathrm{Re}z+(n+1)/2}}&=\sum\limits_{\nu=-\infty}^0\frac{(2^{k+\nu+s})^n}{(1+2^{k+\nu-1+s})^{\mathrm{Re}z+(n+1)/2}} \notag \\
			&\leq \sum\limits_{\ell=-\infty}^{+\infty}\frac{(2^{\ell+s})^n}{(1+2^{\ell-1+s})^{\mathrm{Re}z+(n+1)/2}}\notag\\ 
			& \leq c\sum\limits_{\ell=-\infty}^{0}(2^{\ell})^n+c\sum\limits_{\ell=0}^{+\infty}(2^{\ell})^{-\mathrm{Re}z+(n-1)/2}<c, \label{newsum}
		\end{align}
		provided that $\mathrm{Re}z>(n-1)/2$. Combining (\ref{seira}) and (\ref{newsum}), it follows that 
		\begin{equation}\label{local}
			|I_1|\leq c(z)\mathcal{N}_{\psi}f(\tilde{x}).
		\end{equation}
		By (\ref{small_balls}), we have that $\psi(t)$  behaves like $|B_M(\tilde{x},t)|^{-1}$ as $t\rightarrow 0^+$. Thus, $|I_1|$ is dominated by the Hardy-Littlewood maximal function over the balls of radius at most $1$. A standard covering lemma shows that the maximal operator $\mathcal{N}_{\psi}$ is of weak type $1-1$ and is bounded on $L^p (M)$ for every $1<p\leq \infty$.
		
		\textit{Estimates for $I_2$.}  Firstly, note that when $d_M(\tilde{x}, \tilde{y})>1$, we have $d(x,\gamma y)>1$ for all $\gamma \in \Gamma$. Then, we can control  $(1+\sqrt{R}d(x,\gamma y))^{-1}$ above by $2 (1+\sqrt{R})^{-1}(1+d(x,\gamma y))^{-1}$ if $R\geq 1$, and by $\sqrt{R}^{-1}(1+d(x,\gamma y))^{-1}$ if $R< 1$. Therefore, in both cases, when $\mathrm{Re}z>(n-1)/2$, the kernel estimate (\ref{original}) yields
		\begin{align*}
			|\widehat{\kappa}_{R}^z(\tilde{x}, \tilde{y})|&\leq c(z)\sum_{ \gamma \in \Gamma}(1+d(x,\gamma y))^{-\mathrm{Re}z-1}e^{-\rho d(x,\gamma y)}\\
			&\leq c(z)(1+d_M(\tilde{x},\tilde{y}))^{-(n+1)/2}\sum_{ \gamma \in \Gamma}e^{-\rho d(x,\gamma y)}, \quad d_M(\tilde{x}, \tilde{y})>1.
		\end{align*}
		Therefore, 		
		\begin{align*}
			|I_2|&\leq \int_{B_M(\tilde{x},1)^c}|\widehat{\kappa}_{R}^z(\tilde{x}, \tilde{y})||f(\tilde{y})|d\tilde{y} \\&\leq c(z)\int_{B_M(\tilde{x},1)^c}(1+d_M(\tilde{x},\tilde{y}))^{-(n+1)/2}\sum_{ \gamma \in \Gamma}e^{-\rho d(x,\gamma y)}|f(\tilde{y})|d\tilde{y}
			\\&\leq c(z)\int_{M}\widehat{\kappa}(\tilde{x}, \tilde{y})|f(\tilde{y})|d\tilde{y},
		\end{align*}
		where 
		\[ \widehat{\kappa}(\tilde{x}, \tilde{y})=(1+d_M(\tilde{x}, \tilde{y}))^{-(n+1)/2}\sum_{ \gamma \in \Gamma}e^{-\rho d(x,\gamma y)}, \quad \tilde{x}, \tilde{y}\in M.
		\]
		Note that this kernel is uniformly bounded on $M\times M$ by (\ref{poincare_Cocompact}). Thus, $|I_2|$ is dominated by an integral operator with kernel $\widehat{\kappa}$, acting on $|f|$. To conclude, it suffices to show that for all $q$ large enough, 
		\[
		\sup_{\tilde{x}\in M}\|\widehat{\kappa}(\tilde{x}, \cdot)\|_{L^q(M)}<+\infty \quad \text{and} \quad \sup_{\tilde{y}\in M}\|\widehat{\kappa}(\cdot, \tilde{y})\|_{L^q(M)}<+\infty.
		\]
		By symmetry, we may restrict to the first norm. Then, for every $0<\varepsilon <\rho-\delta (\Gamma)$, we have
		\begin{align*}
			&\int_M \widehat{\kappa}^q(\tilde{x}, \tilde{y})d\tilde{y}=\int_M (1+d_M(\tilde{x}, \tilde{y}))^{-q(n+1)/2}\left(\sum_{ \gamma \in \Gamma}e^{-\rho d(x,\gamma y)}\right)^q d\tilde{y}\\
			\leq&\int_M (1+d_M(\tilde{x}, \tilde{y}))^{-q(n+1)/2}e^{-q\varepsilon d_M(\tilde{x}, \tilde{y})}\left(\sum_{ \gamma \in \Gamma}e^{-(\rho-\varepsilon)d(x,\gamma y)}\right)^q d\tilde{y}\\
			\leq & C^q(\rho-\varepsilon) \int_M (1+d_M(\tilde{x}, \tilde{y}))^{-q(n+1)/2} e^{-q\varepsilon d_M(\tilde{x}, \tilde{y})} d\tilde{y},
		\end{align*}
		by the uniform bound (\ref{poincare_Cocompact}). Therefore, using the volume growth estimate (\ref{volume_annulus}), the last integral can be estimated by 
		\begin{align*}
			&	\sum_{\nu=0}^{+\infty} \int_{\nu <d_M(\tilde{x}, \tilde{y})\leq \nu+1} (1+d_M(\tilde{x}, \tilde{y}))^{-q(n+1)/2} e^{-q\varepsilon d_M(\tilde{x}, \tilde{y})} d\tilde{y}\\
			\leq
			c\, &\sum_{\nu=0}^{+\infty}(1+\nu)^{-q(n+1)/2}(1+\nu)^{n}e^{-q\varepsilon\nu} e^{2\rho \nu},
		\end{align*}
		which is finite, if $q\geq q_0(M)=2\rho/\varepsilon  $. An application of Young's inequality finishes the proof.
	\end{proof}
	
	\textbf{Remark.} The result at infinity for $\widehat{S}_{\ast}^z$ is less precise than the one in \cite[Lemma 4.2]{GIUMA} for $S_{\ast}^z$, where $q\geq 2$ in the rank one case.

	Using complex interpolation, we have the following result.
	\begin{theorem}\label{interpol}  Let $M\in (R)$ and consider $q_0=q_0(M)=2\rho/\varepsilon$, where $0<\varepsilon<\rho-\delta(\Gamma)$. Then, for all $1\leq p\leq 2$ and $\mathrm{Re}z>(n-1)(\frac{1}{p}-\frac{1}{2})$, the following mapping properties of $\widehat{S}_{\ast}^z$ hold:  for every $f\in L^p(M)$, $1< p\leq 2$, 
		\[
		\|\widehat{S}_{\ast}^zf\|_{(L^p+L^r)(M)}\leq c(z)\|f\|_{L^p(M)},
		\] 	
		for all $r \in [pq_0/(2-p+pq_0-q_0), \infty]$. For every $f\in L^1(M)$, it holds
		\[
		\|\widehat{S}_{\ast}^zf\|_{(L^{1,w}+L^r)(M)}\leq c(z)\|f\|_{L^1(M)},
		\] 	
		for every $r\in [q_0, \infty]$.
	\end{theorem}
	
	As a corollary of Theorem \ref{interpol} and standard measure-theoretic arguments, Theorem \ref{Thm} follows.
	
	\section*{Acknowledgment}
	The author would like to thank the referee for his/her critical comments and the careful and insightful review, as well as  M. Kolountzakis and M. Papadimitrakis for conversations and remarks.

	\textbf{Data availability statement} Data sharing not applicable to this article as no datasets were generated or analyzed during the current study.
	
	\textbf{Declarations}
	
	Funding: Supported by the Hellenic Foundation for Research and Innovation, Project HFRI-FM17-1733.
	
	Conflicts of interest/Competing interests: Not applicable
	
	Availability of data and material: Not applicable
	
	Code availability: Not applicable
	
	Ethics approval: Not applicable
	
	Consent to participate: Not applicable 
	
	Consent for publication: Not applicable
	
\end{document}